\theoremstyle{plain}
\newtheorem{theorem}{Theorem}
\newtheorem{corollary}[theorem]{Corollary}
\newtheorem{proposition}{Proposition}
\newtheorem{lemma}[proposition]{Lemma}
\theoremstyle{definition}
\theoremstyle{remark}
\newtheorem{remark}[proposition]{Remark}
\numberwithin{equation}{section}
\newcommand{\R}{\mathbb R}
\newcommand{\ve}{\varepsilon}
\providecommand{\norm}[1]{\lVert#1\rVert}
\DeclareMathOperator{\Div}{div}
\title[the Navier-Stokes equations with the Coriolis force]{A remark on a priori estimate for the Navier-Stokes equations with the Coriolis force}
\author{Hiroki Ito}
\author[J. Kato]{Jun Kato}
\address{Graduate School of Mathematics, Nagoya University, Nagoya 464-8602, Japan}
\email[J. Kato]{jkato@math.nagoya-u.ac.jp}
\begin{document}

\begin{abstract}
The Cauchy problem for the Navier-Stokes equations with the Coriolis force is considered.
It is proved that a similar a priori estimate, which is derived for the Navier-Stokes equations by Lei and Lin \cite{Lei-Lin_2011}, holds under the effect of the Coriolis force.
As an application existence of a unique global solution for arbitrary speed of rotation is proved, as well as its asymptotic behavior.
\end{abstract}

\maketitle

\section{Introduction}
In this note, we consider the initial value problem of the Navier-Stokes equations with the Coriolis force in $\R^{3}$,
\begin{equation}\tag{$\mathrm{NS}_{\Omega}$}\label{NSC}
\left\{
\begin{aligned}
&\partial_{t}u -\nu \Delta u + \Omega e_{3}\times u + (u,\nabla)u+\nabla p = 0, & & \mbox{in}\ (0,\infty)\times \R^{3},\\
&\mathrm{div}\, u=0,  & &\mbox{in}\ (0,\infty)\times \R^{3},\\
&u|_{t=0}=u_{0}, & & \mbox{in}\ \R^{3},
\end{aligned}
\right.
\end{equation}
where $u=u(t,x)=(u_{1}(t,x),u_{2}(t,x),u_{3}(t,x))$ denotes the unknown velocity field, and $p=p(t,x)$ denotes the unknown scalar pressure, while $u_{0}=u_{0}(x)=(u_{0}^{1}(x),u_{0}^{2}(x),u_{0}^{3}(x))$ denotes the initial velocity field.
The constant $\nu >0$ denotes the viscosity coefficient of the fluid, and $\Omega\in\R$ represents the speed of rotation around the vertical unit vector $e_{3}=(0,0,1)$, which is called the Coriolis parameter.

Recently, this problem gained some attention due to its importance in applications to geophysical flows, see e.g.\ \cite{Majda_2003, CDGG_2006}.
Mathematically, $(\mathrm{NS}_{\Omega})$ also have a interesting feature that there exists a global solution for arbitrary large data provided the speed of rotation $\Omega$ is large enough, see e.g. \cite{BMN_1997, CDGG_2006, Iwabuchi-Takada_2013}.
There are another type of results which shows the existence of a global solution uniformly in $\Omega$ provided the data is sufficiently small, see e.g. \cite{Giga-Inui-Mahalov-Saal_2008, Hieber-Shibata_2010, Konieczny-Yoneda_2011, Iwabuchi-Takada_2014}.
The purpose of this note is, concerning to the latter, to relax the smallness condition of the data, based on the idea for the Navier-Stokes equations, $\Omega =0$ in \eqref{NSC}, by \cite{Lei-Lin_2011}.

Before stating our main results, we give a definition of function spaces.
For $m\in \R$, we define
$$
  \chi^{m}(\R^{3}):=\bigl\{f\in \mathcal{S}'\,|\, \widehat{f}\in L^{1}_{\mathrm{loc}},\
    \|f\|_{\chi^{m}}:=\int_{\R^{3}} |\xi|^{m} |\widehat{f}(\xi)|\,d\xi
    <\infty\bigr\}.
$$
In particular, we only use spaces $\chi^{-1}$, $\chi^{0}$, and $\chi^{1}$ below, so we summarize elementary estimates concerning the spaces we will use later.

\begin{lemma}\label{lemma-1}
$(1)$ For $s>1/2$, $\norm{f}_{\chi^{-1}(\R^{3})} \le C\norm{f}_{L^{2}}^{1-\frac{1}{2s}}\norm{f}_{\dot{H}^{s}}^{\frac{1}{2s}}$.

\vspace{6pt}
\noindent
$(2)$ $\norm{f}_{\chi^{0}}\le \norm{f}_{\chi^{-1}}^{1/2} \norm{f}_{\chi^{1}}^{1/2}$.

\vspace{6pt}
\noindent
$(3)$ $\norm{\nabla f}_{L^{\infty}} \le \norm{f}_{\chi^{1}}$.
\end{lemma}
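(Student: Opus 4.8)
The plan is to work entirely on the Fourier side, since each $\chi^{m}$-norm is just a weighted $L^{1}$ integral of $\abs{\widehat f}$. Parts $(2)$ and $(3)$ are then immediate. For $(3)$ I would write $\nabla f = \mathcal{F}^{-1}(i\xi\,\widehat f)$ and invoke the elementary bound $\norm{\mathcal{F}^{-1}g}_{L^\infty}\le \norm{g}_{L^1}$ coming from the Fourier inversion formula; applied to $g=i\xi\,\widehat f$ this gives $\norm{\nabla f}_{L^{\infty}}\le \int_{\R^{3}}\abs{\xi}\,\abs{\widehat f(\xi)}\,d\xi=\norm{f}_{\chi^{1}}$ (with the normalization under which the inversion carries no constant). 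For $(2)$ I would factor the integrand as $\abs{\widehat f}=(\abs{\xi}^{-1}\abs{\widehat f})^{1/2}(\abs{\xi}\abs{\widehat f})^{1/2}$ and apply the Cauchy--Schwarz inequality, which yields $\norm{f}_{\chi^{0}}\le \norm{f}_{\chi^{-1}}^{1/2}\norm{f}_{\chi^{1}}^{1/2}$ at once.

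The substance is in $(1)$. First I would fix a threshold $R>0$ and split $\norm{f}_{\chi^{-1}}=\int_{\abs{\xi}\le R}+\int_{\abs{\xi}>R}$. On the low-frequency part I apply Cauchy--Schwarz against $\abs{\widehat f}\in L^{2}$, producing the factor $\bigl(\int_{\abs{\xi}\le R}\abs{\xi}^{-2}\,d\xi\bigr)^{1/2}$; in $\R^{3}$ this integral equals $c\,R$ by passing to polar coordinates, so the low-frequency part is bounded by $C R^{1/2}\norm{f}_{L^{2}}$. On the high-frequency part I write $\abs{\xi}^{-1}\abs{\widehat f}=\abs{\xi}^{-1-s}\cdot \abs{\xi}^{s}\abs{\widehat f}$ and again use Cauchy--Schwarz, the first factor now being $\bigl(\int_{\abs{\xi}>R}\abs{\xi}^{-2-2s}\,d\xi\bigr)^{1/2}$. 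Here the dimensional condition enters: in $\R^{3}$ this integral converges precisely when $2+2s>3$, i.e.\ $s>1/2$, and then equals $c\,R^{1-2s}$, giving the bound $C R^{1/2-s}\norm{f}_{\dot H^{s}}$.

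Combining the two pieces yields $\norm{f}_{\chi^{-1}}\le C R^{1/2}\norm{f}_{L^{2}}+C R^{1/2-s}\norm{f}_{\dot H^{s}}$ for every $R>0$, and the last step is to optimize in $R$. Balancing the two terms forces $R^{s}\sim \norm{f}_{\dot H^{s}}/\norm{f}_{L^{2}}$, and substituting this value makes both terms equal to $C\norm{f}_{L^{2}}^{1-\frac{1}{2s}}\norm{f}_{\dot H^{s}}^{\frac{1}{2s}}$, which is the claimed estimate. I expect the only real obstacle to be bookkeeping rather than ideas: one must check that $s>1/2$ is exactly the integrability threshold making the high-frequency integral finite (this is where the hypothesis is used and cannot be dropped), and verify that the exponents produced by the optimization match $1-\frac{1}{2s}$ and $\frac{1}{2s}$. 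As an independent check, one can note that the estimate is invariant under the rescaling $f\mapsto f(\lambda\,\cdot)$, which pins down the powers of $\norm{f}_{L^{2}}$ and $\norm{f}_{\dot H^{s}}$ without any computation.
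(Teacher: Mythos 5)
Your proposal is correct and follows essentially the same route as the paper: the same dyadic-threshold splitting at $|\xi|=R$ with Cauchy--Schwarz on each piece and optimization in $R$ for $(1)$, the same factorization plus Cauchy--Schwarz for $(2)$, and Fourier inversion (Hausdorff--Young) for $(3)$. The only cosmetic point is that with the paper's normalization $\widehat f(\xi)=\int e^{-ix\cdot\xi}f(x)\,dx$ the inversion formula carries a factor $(2\pi)^{-3}\le 1$, so the stated inequality in $(3)$ still holds.
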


\begin{proof}
(1) We take $R>0$, which is determined later, to divide the integral
\begin{align*}
  \|f\|_{\chi^{-1}}& =\int_{|\xi|\le R}|\xi|^{-1}|\widehat{f}(\xi)|\,d\xi
    + \int_{|\xi|> R}|\xi|^{-1}|\widehat{f}(\xi)|\,d\xi\\
  & \le \Bigl(\int_{|\xi|\le R}|\xi|^{-2}\,d\xi\Bigr)^{1/2} \|f\|_{L^{2}}
    + \Bigl(\int_{|\xi|> R}|\xi|^{-2-2s}\Bigr)^{1/2} \|f\|_{\dot{H}^{s}}\\
  & = |S^{2}|^{1/2} \Bigl(R^{1/2}\|f\|_{L^{2}} 
    + \frac{1}{\sqrt{2s-1}}R^{-s+1/2}\|f\|_{\dot{H}^{s}}\Bigr).
\end{align*}
Then, choosing $R=\|f\|_{L^{2}}^{-1/s}\|f\|_{\dot{H}^{2}}^{1/s}$, we obtain the desired result.

\noindent
(2) This estimate is easily derived by the H\"older inequality,
\begin{equation*}
  \|f\|_{\chi^{0}} 
  = \int |\xi|^{-1/2}|\widehat{f}(\xi)|^{1/2}|\xi|^{1/2}|\widehat{f}(\xi)|^{1/2}\,d\xi
  \le \norm{f}_{\chi^{-1}}^{1/2} \norm{f}_{\chi^{1}}^{1/2}.
\end{equation*}

\noindent
(3) This is also easily derived from the Fourier inversion formula and the Hausdorff-Young inequality.
\end{proof}

Now we state our main results.

\begin{theorem}\label{prop-apriori}
Let $\Omega\in\R$, and let $u_{0}\in \chi^{-1}$ satisfy $\mathrm{div}\,u_{0}=0$ and $\|u_{0}\|_{\chi^{-1}}<(2\pi)^{3}\nu$.
For $T>0$, assume that $u\in C([0,T);\,\chi^{-1})$ is a solution to $(\mathrm{NS}_{\Omega})$ in the distribution sense satisfying
\begin{equation*}
  u\in L^{1}(0,T;\,\chi^{1}),\quad \partial_{t}u\in L^{1}(0,T;\,\chi^{-1}).
\end{equation*}
Then, $u$ satisfies
\begin{equation}\label{apriori}
  \|u(t)\|_{\chi^{-1}} 
  + (\nu - (2\pi)^{-3}\|u_{0}\|_{\chi^{-1}})
  \int_{0}^{t}\|u(\tau)\|_{\chi^{1}}\,d\tau
  \le \| u_{0}\|_{\chi^{-1}},\quad 0\le t < T.
\end{equation}
\end{theorem}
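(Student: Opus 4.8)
The plan is to follow the Lei--Lin scheme on the Fourier side and derive a differential inequality for $\|u(t)\|_{\chi^{-1}}$ by testing the equation against $\overline{\widehat u(\xi)}$ frequency by frequency, the one new point being to show that the rotation term is invisible to this estimate. Applying the Fourier transform to $(\mathrm{NS}_\Omega)$ gives, for a.e.\ $\xi$,
$$\partial_t\widehat u = -\nu|\xi|^2\widehat u - \Omega\,(e_3\times\widehat u) - i\xi\,\widehat p - \widehat{(u\cdot\nabla)u},$$
and I would take the Hermitian inner product with $\widehat u$ and keep the real part. The divergence-free condition reads $\xi\cdot\widehat u(\xi)=0$, which immediately kills the pressure term since $\mathrm{Re}(\overline{\widehat u}\cdot i\xi\widehat p)=\mathrm{Re}(i\widehat p\,\overline{\xi\cdot\widehat u})=0$; this also lets me avoid introducing the Leray projection explicitly.

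The crucial observation is that the Coriolis term drops out: writing $\widehat u=a+ib$ with $a,b\in\R^3$ one checks that $\mathrm{Re}(\overline{\widehat u}\cdot(e_3\times\widehat u))=0$, because $a\cdot(e_3\times a)=b\cdot(e_3\times b)=0$ and the remaining cross terms are purely imaginary. Hence the rotation does not enter the estimate at all, no matter how large $|\Omega|$ is. What survives is $\mathrm{Re}(\overline{\widehat u}\cdot\partial_t\widehat u)=-\nu|\xi|^2|\widehat u|^2-\mathrm{Re}(\overline{\widehat u}\cdot\widehat{(u\cdot\nabla)u})$, and since $\mathrm{Re}(\overline{\widehat u}\cdot\partial_t\widehat u)=|\widehat u|\,\partial_t|\widehat u|$ I obtain the pointwise inequality $\partial_t|\widehat u|\le -\nu|\xi|^2|\widehat u|+|\widehat{(u\cdot\nabla)u}|$.

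Next I would multiply by $|\xi|^{-1}$ and integrate in $\xi$. The viscous term produces exactly $-\nu\|u\|_{\chi^1}$, while for the nonlinear term I use the divergence form $(u\cdot\nabla)u=\Div(u\otimes u)$ to write $\widehat{(u\cdot\nabla)u}(\xi)=i(2\pi)^{-3}\int(\xi\cdot\widehat u(\xi-\eta))\widehat u(\eta)\,d\eta$, so that $|\xi|^{-1}|\widehat{(u\cdot\nabla)u}(\xi)|\le(2\pi)^{-3}\int|\widehat u(\xi-\eta)||\widehat u(\eta)|\,d\eta$. Integrating in $\xi$ and applying Young's convolution inequality bounds this by $(2\pi)^{-3}\|\widehat u\|_{L^1}^2=(2\pi)^{-3}\|u\|_{\chi^0}^2$, and then Lemma \ref{lemma-1}(2) gives $(2\pi)^{-3}\|u\|_{\chi^{-1}}\|u\|_{\chi^1}$. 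Altogether this yields
$$\frac{d}{dt}\|u(t)\|_{\chi^{-1}}\le -\bigl(\nu-(2\pi)^{-3}\|u(t)\|_{\chi^{-1}}\bigr)\|u(t)\|_{\chi^1}.$$

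Finally I would close the estimate by a continuity argument. Since $\|u_0\|_{\chi^{-1}}<(2\pi)^3\nu$ and the right-hand side above is nonpositive as long as $\|u(t)\|_{\chi^{-1}}<(2\pi)^3\nu$, the quantity $\|u(t)\|_{\chi^{-1}}$ is nonincreasing and can never reach $(2\pi)^3\nu$; hence $\|u(t)\|_{\chi^{-1}}\le\|u_0\|_{\chi^{-1}}$ for all $t$, and in particular $\nu-(2\pi)^{-3}\|u(\tau)\|_{\chi^{-1}}\ge\nu-(2\pi)^{-3}\|u_0\|_{\chi^{-1}}>0$. Integrating the differential inequality over $[0,t]$ and using this lower bound on the coefficient gives precisely \eqref{apriori}. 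I expect the main technical obstacle to be the rigorous justification of the pointwise-in-$\xi$ computation: namely that the hypotheses $u\in C([0,T);\chi^{-1})$ and $\partial_t u\in L^1(0,T;\chi^{-1})$ make $t\mapsto\widehat u(t,\xi)$ absolutely continuous for a.e.\ $\xi$, that $\partial_t|\widehat u|=\mathrm{Re}(\overline{\widehat u}\cdot\partial_t\widehat u)/|\widehat u|$ holds a.e.\ (with the convention that it is zero where $\widehat u=0$), and that Fubini's theorem applies to interchange $\partial_t$ with the $\xi$-integral, the assumption $u\in L^1(0,T;\chi^1)$ guaranteeing that the resulting time integral is finite.
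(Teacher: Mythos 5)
Your proposal is correct and follows essentially the same route as the paper: the Fourier-side Lei--Lin computation, the observation that $\mathrm{Re}[(e_{3}\times\widehat{u})\cdot\overline{\widehat{u}}]=0$ so the Coriolis term is invisible, the use of $\Div u=0$ to kill the pressure and to put the full $|\xi|$ on the convolution, Lemma \ref{lemma-1}(2), and a continuity argument to keep $\|u(t)\|_{\chi^{-1}}$ below $(2\pi)^{3}\nu$. The only cosmetic differences are that the paper regularizes via $(|\widehat{u}|^{2}+\ve)^{1/2}$ and works with the time-integrated inequality where you invoke the a.e.\ derivative of $|\widehat{u}|$ and a pointwise differential inequality; both are standard and equivalent here.
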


\begin{remark}
(1) This a priori estimate is first derived in the case $\Omega =0$ in \cite[Proof of Theorem 1.1]{Lei-Lin_2011}. 
Here, Theorem \ref{prop-apriori} states that the same estimate also holds under the effect of the Coriolis force.

\vspace{6pt}
\noindent
(2) In this note, we define the Fourier transform of $f$ by
$$
  \widehat{f}(\xi)=\mathcal{F}[f](\xi):=\int e^{-i x\cdot \xi}f(x)\,dx.
$$
The constant $(2\pi)^{3}$ in the theorem appears from the following formula:
$$
  \quad \mathcal{F}[fg](\xi)=(2\pi)^{-3}(\widehat{f}\ast\widehat{g})(\xi),
$$
where $f\ast g$ denotes the convolution of $f$ and $g$.

\vspace{6pt}
\noindent
(3) From the a priori estimate \eqref{apriori}, we especially obtain
$$
  \|u\|_{L^{\infty}(0,T;\,\chi^{-1})}\le \|u_{0}\|_{\chi^{-1}},\quad
  \|u\|_{L^{1}(0,T;\, \chi^{1})} \le \frac{\|u_{0}\|_{\chi^{-1}}}{\nu - (2\pi)^{-3}\|u_{0}\|_{\chi^{-1}}}.
$$
\end{remark}

As an application of Theorem \ref{prop-apriori} we obtain a unique global solution to $(\mathrm{NS}_{\Omega})$.

\begin{theorem}\label{thm-0}
Let $\Omega\in\R$.
Assume that $u_{0}\in \chi^{-1}(\R^{3})$ satisfy $\mathrm{div}\,u_{0}=0$ and $\|u_{0}\|_{\chi^{-1}}<(2\pi)^{3}\nu$.
Then, there exists a unique global solution $u\in C([0,\infty);\,\chi^{-1})$ to $(\mathrm{NS}_{\Omega})$ satisfying
$$
  u\in L^{1}(0,\infty;\chi^{-1}),\quad \partial_{t}u\in L^{1}_{\mathrm{loc}}(0,\infty;\,\chi^{-1}),
$$
and
$$
  \sup_{t>0}\bigl\{\|u(t)\|_{\chi^{-1}}
  +(\nu -(2\pi)^{-3} \|u_{0}\|_{\chi^{-1}})\int_{0}^{t}\|u(\tau)\|_{\chi^{1}}\,d\tau \bigr\}
  \le \|u_{0}\|_{\chi^{^{-1}}}.
$$
\end{theorem}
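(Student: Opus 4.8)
The plan is to construct the solution as a mild solution via Duhamel's formula and then invoke Theorem~\ref{prop-apriori} to promote it to a global one. Applying the Helmholtz projection $\mathbb{P}$ to eliminate the pressure, I rewrite $(\mathrm{NS}_\Omega)$ as $\partial_t u-\nu\Delta u+\Omega\mathbb{P}(e_3\times u)+\mathbb{P}(u,\nabla)u=0$, and let $T_\Omega(t)$ denote the semigroup generated by the linear Stokes--Coriolis operator $\nu\Delta-\Omega\mathbb{P}(e_3\times\cdot)$. The decisive structural point is that, in Fourier variables, $T_\Omega(t)$ acts as $e^{-\nu t|\xi|^2}$ times a factor that is unitary for each fixed $\xi$: on divergence-free fields the Coriolis part contributes only the oscillations $e^{\pm i\Omega t\xi_3/|\xi|}$ of the Poincar\'e waves, so that $|\mathcal{F}[T_\Omega(t)f](\xi)|\le e^{-\nu t|\xi|^2}|\widehat f(\xi)|$ \emph{independently of} $\Omega$. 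Hence every $\chi^{m}$ bound for $T_\Omega(t)$ coincides with the corresponding heat-semigroup bound; in particular $\norm{T_\Omega(t)u_0}_{\chi^{-1}}\le\norm{u_0}_{\chi^{-1}}$, and since $\int_0^\infty|\xi|e^{-\nu t|\xi|^2}\,dt=(\nu|\xi|)^{-1}$ one gets $\norm{T_\Omega(\cdot)u_0}_{L^1(0,\infty;\chi^1)}\le\nu^{-1}\norm{u_0}_{\chi^{-1}}$.

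Next I would realize the solution as a fixed point of $\Phi(u)=T_\Omega(\cdot)u_0-B(u,u)$, where $B(u,v)(t)=\int_0^t T_\Omega(t-s)\mathbb{P}\,\mathrm{div}(u\otimes v)(s)\,ds$, the nonlinearity being written in divergence form so that one derivative is absorbed by the semigroup. Using $\mathcal{F}[fg]=(2\pi)^{-3}\widehat f\ast\widehat g$, Young's inequality $\norm{\widehat f\ast\widehat g}_{L^1}\le\norm f_{\chi^0}\norm g_{\chi^0}$ and Lemma~\ref{lemma-1}(2), one obtains the bilinear bounds $\norm{B(u,u)}_{L^\infty(0,T;\chi^{-1})}\le(2\pi)^{-3}\norm u_{L^\infty(0,T;\chi^{-1})}\norm u_{L^1(0,T;\chi^1)}$ and $\nu\norm{B(u,u)}_{L^1(0,T;\chi^1)}\le(2\pi)^{-3}\norm u_{L^\infty(0,T;\chi^{-1})}\norm u_{L^1(0,T;\chi^1)}$, displaying precisely the constant $(2\pi)^{-3}$ that governs the threshold. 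On the ball $\{\norm u_{L^\infty(0,T;\chi^{-1})}\le R,\ \norm u_{L^1(0,T;\chi^1)}\le\delta\}$ with $R\in(\norm{u_0}_{\chi^{-1}},(2\pi)^3\nu)$, the stability conditions read $\norm{u_0}_{\chi^{-1}}+(2\pi)^{-3}R\delta\le R$ and $\norm{T_\Omega(\cdot)u_0}_{L^1(0,T;\chi^1)}+(2\pi)^{-3}\nu^{-1}R\delta\le\delta$. Since $\norm{T_\Omega(\cdot)u_0}_{L^1(0,T;\chi^1)}\to0$ as $T\to0$ by dominated convergence for \emph{any} $u_0\in\chi^{-1}$, both hold once $\delta$ and then $T$ are small, and a parallel computation makes $\Phi$ a contraction. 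This produces a unique local mild solution for the whole subcritical range $\norm{u_0}_{\chi^{-1}}<(2\pi)^3\nu$; the regularity $u\in L^1(0,T;\chi^1)$ together with the equation yields $\partial_t u\in L^1(0,T;\chi^{-1})$, so $u$ is a distributional solution of the required class.

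I would then extend the local solution to $[0,\infty)$. Let $T^*$ be the maximal existence time. For each $T<T^*$ the solution satisfies the hypotheses of Theorem~\ref{prop-apriori}, which supplies the uniform bounds $\norm{u(t)}_{\chi^{-1}}\le\norm{u_0}_{\chi^{-1}}$ and $\int_0^{t}\norm{u(\tau)}_{\chi^1}\,d\tau\le\norm{u_0}_{\chi^{-1}}/(\nu-(2\pi)^{-3}\norm{u_0}_{\chi^{-1}})$ for $t<T^*$, i.e.\ exactly the estimate asserted in the statement. Uniqueness follows from the same bilinear estimate applied to the difference $w=u-v$ of two solutions with the same data: on any subinterval on which $\int\norm u_{\chi^1}+\int\norm v_{\chi^1}$ is small, the contraction forces $w=0$, and covering $[0,T^*)$ by finitely many such subintervals (possible since $u,v\in L^1_{\mathrm{loc}}(0,\infty;\chi^1)$ by the a priori bound) gives $w\equiv0$.

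The main obstacle is the global continuation, and it is here that the criticality of $\chi^{-1}$ bites: the local existence time furnished by the contraction depends on the entire profile of the datum through $\norm{T_\Omega(\cdot)u(t_0)}_{L^1(0,\tau;\chi^1)}$, not merely on $\norm{u(t_0)}_{\chi^{-1}}$, so a crude lower bound on the time-step need not persist as $t_0\uparrow T^*$. The remedy is to exploit the a priori bound $u\in L^1(0,T^*;\chi^1)$: by absolute continuity of this integral, $\int_{t_0}^{T^*}\norm{u}_{\chi^1}$, and therefore the relevant linear contribution of $T_\Omega(\cdot)u(t_0)$ over a short interval, becomes uniformly small for $t_0$ near $T^*$. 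This furnishes a uniform lower bound on the continuation step, contradicting finiteness of $T^*$; hence $T^*=\infty$, and letting $T\to\infty$ in Theorem~\ref{prop-apriori} delivers the stated global estimate.
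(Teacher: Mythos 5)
Your proposal is essentially correct, but it follows a genuinely different route from the paper. The paper never performs a fixed-point argument in the critical space: it first truncates the datum in Fourier space to get $u_{0}^{R}\in H^{\infty}$, invokes the already-established $H^{s}$ global theory (Theorem \ref{thm1}, itself built on the Sobolev local well-posedness of Koh--Lee--Takada plus the a priori estimate), and then shows $\{u^{R}\}$ is Cauchy in $L^{\infty}(0,\infty;\chi^{-1})\cap L^{1}(0,\infty;\chi^{1})$ via a difference estimate and Gronwall, identifying the limit with a solution of the integral equation; uniqueness comes from the same difference estimate, which is close in spirit to your last step. You instead build the solution directly by contraction in $L^{\infty}\chi^{-1}\cap L^{1}\chi^{1}$, using the key structural fact that on divergence-free fields the Stokes--Coriolis semigroup is, frequency by frequency, $e^{-\nu t|\xi|^{2}}$ times an orthogonal matrix, so all $\chi^{m}$ bounds are $\Omega$-independent; this is correct and is exactly why the theorem is uniform in $\Omega$. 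Your approach is more self-contained (no Sobolev theory, no approximation of the data) and isolates the constant $(2\pi)^{-3}$ transparently in the bilinear estimate, at the price of having to set up the critical-space local theory; the paper's approach avoids that but needs the full machinery of Section 3 and an extra limiting argument to recover $\partial_{t}u\in L^{1}_{\mathrm{loc}}(0,\infty;\chi^{-1})$ and continuity in time. One step of yours deserves more care: in the continuation past $T^{*}$, a fixed continuation step $\tau_{0}>0$ uniform in $t_{0}$ does not follow directly from the smallness of $\int_{t_{0}}^{T^{*}}\|u\|_{\chi^{1}}$; what does work is to extract from the Duhamel formula on $(t_{0},T^{*})$ that $\|T_{\Omega}(\cdot)u(t_{0})\|_{L^{1}(0,T^{*}-t_{0};\chi^{1})}\to 0$ as $t_{0}\to T^{*}$, and then use the monotonicity of $s\mapsto\|T_{\Omega}(s)v\|_{\chi^{1}}$ to double the interval, so that the solution extends to time $t_{0}+2(T^{*}-t_{0})>T^{*}$. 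With that adjustment the argument closes.
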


\begin{remark}

\vspace{6pt}
\noindent
(1) There are several results which treats the existence of a unique global solution to \eqref{NSC}, see \cite{Iwabuchi-Takada_2014} and reference therein.
In particular, the spaces $FM_{0}^{-1}$, which is considered by Giga, Inui, Mahalov, and Saal \cite{Giga-Inui-Mahalov-Saal_2008}, and $\mathcal{B}^{-1}_{1,2}$ by \cite{Iwabuchi-Takada_2014}, are larger than $\chi^{-1}$.
However, the advantage of this result is that the condition of the size of the data is merely $\|u_{0}\|_{\chi^{-1}}<(2\pi)^{3}\nu$.

\vspace{6pt}
\noindent
(2)
In the Navier-Stokes equations, the case $\Omega =0$, the corresponding result is proved in \cite[Theorem 1.1]{Lei-Lin_2011}. 
We notice that there is also the another approach by \cite[Theorem 1.3]{Zhang-Yin_2013-P}.
In our forthcoming paper we will consider that approach for \eqref{NSC}.
\end{remark}

As a byproduct, we also obtain the following.

\begin{theorem}\label{thm1}
Let $s>3/2$ and $\Omega\in\R$.
Assume that $u_{0}\in H^{s}(\R^{3})$ satisfy $\mathrm{div}\,u_{0}=0$ and $\|u_{0}\|_{\chi^{-1}}<(2\pi)^{3}\nu$.
Then, there exists a unique global solution $u\in C([0,\infty);\,H^{s})$ to $(\mathrm{NS}_{\Omega})$ satisfying
$$
  u\in AC([0,\infty);\,H^{s-1})
  \cap L^{1}_{\mathrm{loc}}(0,\infty;H^{s+1})
$$
and
$$
  \sup_{t>0}\bigl\{\|u(t)\|_{\chi^{-1}}
  +(\nu -(2\pi)^{-3} \|u_{0}\|_{\chi^{-1}})\int_{0}^{t}\|u(\tau)\|_{\chi^{1}}\,d\tau \bigr\}
  \le \|u_{0}\|_{\chi^{^{-1}}}.
$$
\end{theorem}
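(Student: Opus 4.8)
The plan is to propagate the global $\chi^{-1}$ information of Theorem~\ref{prop-apriori} up to the higher Sobolev regularity $H^{s}$ by combining it with a local-in-time $H^{s}$ well-posedness theory and a Gronwall argument. The restriction $s>3/2$ is used twice: it makes $H^{s}(\R^{3})$ a Banach algebra (needed to close the local theory and the commutator estimate), and it yields the embeddings $H^{s}\hookrightarrow\chi^{-1}$ and $H^{s+1}\hookrightarrow\chi^{1}$. Both embeddings follow from the same Cauchy--Schwarz frequency splitting used in the proof of Lemma~\ref{lemma-1}(1); the second one requires exactly $s+1>5/2$, i.e.\ $s>3/2$. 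Consequently, any $H^{s}$ solution is automatically a $\chi^{-1}$ solution of the regularity demanded in the hypotheses of Theorem~\ref{prop-apriori}.

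First I would establish local existence. Since the Coriolis term $\Omega e_{3}\times u$ is a constant-coefficient, skew-symmetric zeroth-order perturbation, a standard scheme (Galerkin/mollification, or a mild formulation using the Stokes--Coriolis semigroup) yields, for divergence-free $u_{0}\in H^{s}$, a unique maximal solution $u\in C([0,T^{*});H^{s})\cap L^{2}_{loc}([0,T^{*});H^{s+1})$, with $\partial_{t}u\in L^{1}_{loc}([0,T^{*});H^{s-1})$ read off from the equation (whence $u\in AC$ into $H^{s-1}$). Uniqueness in this class is classical, and in any case is inherited from the uniqueness part of Theorem~\ref{thm-0} via $H^{s}\hookrightarrow\chi^{-1}$.

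Next I would derive the $H^{s}$ differential inequality. Applying $\Lambda^{s}:=(-\Delta)^{s/2}$ and pairing with $\Lambda^{s}u$ in $L^{2}$, the pressure drops by $\Div u=0$, the dissipation produces the good term $\nu\|u\|_{\dot{H}^{s+1}}^{2}$, and the Coriolis term drops out because $\langle e_{3}\times \Lambda^{s}u,\Lambda^{s}u\rangle_{L^{2}}=0$ pointwise. In the convection term the divergence-free transport part vanishes, $\langle (u\cdot\nabla)\Lambda^{s}u,\Lambda^{s}u\rangle=0$, leaving only the commutator, which the Kato--Ponce estimate bounds by $C\|\nabla u\|_{L^{\infty}}\|u\|_{\dot{H}^{s}}^{2}$. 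Together with the $L^{2}$ energy identity this gives
\begin{equation*}
  \frac{1}{2}\frac{d}{dt}\|u\|_{H^{s}}^{2}+\nu\|u\|_{H^{s+1}}^{2}
  \le C\|\nabla u\|_{L^{\infty}}\|u\|_{H^{s}}^{2}.
\end{equation*}
By Lemma~\ref{lemma-1}(3), $\|\nabla u\|_{L^{\infty}}\le\|u\|_{\chi^{1}}$. Since on every $[0,T]\subset[0,T^{*})$ the solution verifies the hypotheses of Theorem~\ref{prop-apriori} (the integrability $u\in L^{1}(0,T;\chi^{1})$ comes from $L^{2}_{loc}(H^{s+1})\hookrightarrow L^{1}_{loc}(\chi^{1})$, and $\partial_{t}u\in L^{1}(0,T;\chi^{-1})$ from the equation), its conclusion \eqref{apriori} supplies the uniform bound $\int_{0}^{T^{*}}\|u(\tau)\|_{\chi^{1}}\,d\tau\le \|u_{0}\|_{\chi^{-1}}/(\nu-(2\pi)^{-3}\|u_{0}\|_{\chi^{-1}})$. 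Feeding this into Gronwall's inequality shows $\|u(t)\|_{H^{s}}$ cannot blow up at any finite time, so the continuation criterion forces $T^{*}=\infty$; the stated a priori bound is then just \eqref{apriori} for the global solution.

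The main obstacle is the bookkeeping that links the two scales. The global-in-time control lives entirely at the low-regularity $\chi^{-1}$ level and provides only the finiteness of $\int_{0}^{\infty}\|u\|_{\chi^{1}}$; the $H^{s}$ norm is never shown to be small or globally bounded a priori, but is merely kept finite on compact time intervals through the Gronwall loop closed by $\|\nabla u\|_{L^{\infty}}\le\|u\|_{\chi^{1}}$. The delicate point is therefore to check, before Theorem~\ref{prop-apriori} may be invoked, that the constructed $H^{s}$ solution genuinely meets its integrability hypotheses on each finite interval, which is precisely where the embedding $H^{s+1}\hookrightarrow\chi^{1}$ (and hence the hypothesis $s>3/2$) is essential.
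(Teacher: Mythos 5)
Your proposal is correct and follows essentially the same route as the paper: local $H^{s}$ well-posedness, the $H^{s}$ energy inequality $\frac{d}{dt}\|u\|_{H^{s}}\le C\|\nabla u\|_{L^{\infty}}\|u\|_{H^{s}}$ (the paper's Proposition \ref{prop-energy}, obtained there too via the commutator/Kato--Ponce argument with the Coriolis term dropping by skew-symmetry), then Lemma \ref{lemma-1}(3) together with the a priori bound of Theorem \ref{prop-apriori} to control $\int_{0}^{T^{*}}\|\nabla u\|_{L^{\infty}}\,d\tau$ and rule out finite-time blow-up. The only cosmetic difference is that the paper imports the local theory and the energy estimate from Koh--Lee--Takada rather than constructing them, and phrases the continuation step as a contradiction with $\lim_{t\to T^{*}}\|u(t)\|_{H^{s}}=\infty$.
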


\begin{remark}
Since $s>3/2$, we have $H^{s}\hookrightarrow \chi^{-1}$ by Lemma \ref{lemma-1}.
The condition $s>3/2$ follows from the local well-posedness by Proposition \ref{prop-lwp} which we employ for the proof.
For a interval $I$ and a Banach space $X$, $AC(I;\,X)$ denotes the space of $X$-valued absolutely continuous functions.
\end{remark}

Next theorem states the asymptotic behavior of a given global solution to \eqref{NSC} in the framework of Sobolev spaces.
\begin{theorem}\label{thm2}
Let $s>1/2$ and $\Omega\in\R$.
Assume that $u\in C([0,\infty);\,H^{s}(\R^{3}))$ is a global solution to $(\mathrm{NS}_{\Omega})$ satisfying
\begin{equation}\label{sol-cond}
  u\in AC([0,\infty);\,H^{s-1}(\R^{3}))
  \cap L^{1}_{\mathrm{loc}}(0,\infty;H^{s+1}(\R^{3})).
\end{equation}
Then, $\lim_{t\to\infty}\|u(t)\|_{\chi^{-1}}=0$.
\end{theorem}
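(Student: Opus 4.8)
The plan is to combine an $L^{2}$ energy estimate with the interpolation inequality of Lemma \ref{lemma-1} to force $\liminf_{t\to\infty}\|u(t)\|_{\chi^{-1}}=0$, and then to upgrade this to a genuine limit by exploiting the monotonicity that the a priori estimate \eqref{apriori} provides once the $\chi^{-1}$ norm has become subcritical.

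First I would record the basic energy identity. The essential feature of $(\mathrm{NS}_{\Omega})$ here is that the Coriolis term is skew-symmetric: pairing $\Omega e_{3}\times u$ with $u$ in $L^{2}$ gives $\Omega\int(e_{3}\times u)\cdot u\,dx=0$ pointwise, exactly as for the convection and pressure terms. Using the regularity \eqref{sol-cond} — note that $u(t)\in H^{s+1}\hookrightarrow L^{\infty}$ for a.e.\ $t$ since $s+1>3/2$, which justifies the integration by parts in the nonlinear term, and that $\partial_{t}u\in L^{1}_{\mathrm{loc}}(H^{s-1})$ pairs with $u\in H^{s}\hookrightarrow H^{1-s}$ because $s>1/2$ — I obtain $\tfrac12\tfrac{d}{dt}\|u\|_{L^{2}}^{2}+\nu\|\nabla u\|_{L^{2}}^{2}=0$ for a.e.\ $t$, hence $\|u(t)\|_{L^{2}}\le\|u_{0}\|_{L^{2}}$ and $\int_{0}^{\infty}\|\nabla u(\tau)\|_{L^{2}}^{2}\,d\tau<\infty$.

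Next I would feed this into Lemma \ref{lemma-1}(1) with the exponent $s=1$. Since $u(t)\in H^{s+1}\hookrightarrow\dot H^{1}$ for a.e.\ $t$, one has $\|u(t)\|_{\chi^{-1}}\le C\|u(t)\|_{L^{2}}^{1/2}\|\nabla u(t)\|_{L^{2}}^{1/2}\le C\|u_{0}\|_{L^{2}}^{1/2}\|\nabla u(t)\|_{L^{2}}^{1/2}$ for a.e.\ $t$. Raising to the fourth power and integrating in time gives $\int_{0}^{\infty}\|u(\tau)\|_{\chi^{-1}}^{4}\,d\tau\le C\|u_{0}\|_{L^{2}}^{2}\int_{0}^{\infty}\|\nabla u(\tau)\|_{L^{2}}^{2}\,d\tau<\infty$. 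Since $t\mapsto\|u(t)\|_{\chi^{-1}}$ is continuous (as $u\in C([0,\infty);H^{s})$ and $H^{s}\hookrightarrow\chi^{-1}$), the finiteness of this integral forces $\liminf_{t\to\infty}\|u(t)\|_{\chi^{-1}}=0$. In particular there is a time $t_{0}$ with $\|u(t_{0})\|_{\chi^{-1}}<(2\pi)^{3}\nu$.

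Finally I would upgrade the $\liminf$ to a limit. Restarting the equation at $t_{0}$ with the now-subcritical datum $u(t_{0})$, the skew-symmetry computation underlying Theorem \ref{prop-apriori} yields the differential inequality $\tfrac{d}{dt}\|u\|_{\chi^{-1}}+(\nu-(2\pi)^{-3}\|u\|_{\chi^{-1}})\|u\|_{\chi^{1}}\le0$; a continuity/bootstrap argument then keeps $\|u(t)\|_{\chi^{-1}}\le\|u(t_{0})\|_{\chi^{-1}}<(2\pi)^{3}\nu$ and shows that $\|u(t)\|_{\chi^{-1}}$ is non-increasing for $t\ge t_{0}$. A non-increasing function whose $\liminf$ is $0$ tends to $0$, which is the claim. \emph{The main obstacle} is precisely this last step: running the $\chi^{-1}$ estimate requires $u\in L^{1}_{\mathrm{loc}}(\chi^{1})$ and $\partial_{t}u\in L^{1}_{\mathrm{loc}}(\chi^{-1})$, and these follow from \eqref{sol-cond} only when $s>3/2$, since the embeddings $H^{s+1}\hookrightarrow\chi^{1}$ and $H^{s-1}\hookrightarrow\chi^{-1}$ both fail for $1/2<s\le3/2$. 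For the remaining range I would resolve this either by parabolic smoothing — for $t>t_{0}$ the solution gains regularity, so on $[t_{1},\infty)$ with $t_{1}>t_{0}$ it lies in a higher Sobolev space and the estimate applies — or by applying the global well-posedness of Theorem \ref{thm-0} to the $\chi^{-1}$ datum $u(t_{0})$ and identifying the resulting solution with $u$ by uniqueness, thereby transferring the a priori bound \eqref{apriori}, and in particular its monotonicity, to $u$ on $[t_{0},\infty)$.
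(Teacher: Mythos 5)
Your proposal is correct in outline and follows the same three-stage strategy as the paper (an $L^{2}$ energy estimate, the interpolation $\|f\|_{\chi^{-1}}\le C\|f\|_{L^{2}}^{1/2}\|\nabla f\|_{L^{2}}^{1/2}$ to obtain $\int_{0}^{\infty}\|\cdot\|_{\chi^{-1}}^{4}\,dt<\infty$, then a restart via the small-data theory), but it is genuinely streamlined at the first stage: you run the energy identity on $u$ itself, whereas the paper first splits $u_{0}=v_{0}+w_{0}$ into low and high frequencies, solves $(\mathrm{NS}_{\Omega})$ globally from the small-$\chi^{-1}$ piece $w_{0}$, and performs the $L^{2}$ estimate on the perturbation $v=u-w$, absorbing the cross term $\langle(v,\nabla)w,v\rangle$ by means of $\int_{0}^{\infty}\|w\|_{\chi^{0}}^{2}\,dt<\infty$ from the a priori bound. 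Your shortcut is legitimate here because $s>1/2$ puts $u_{0}\in H^{s}\hookrightarrow L^{2}$, so the direct energy identity is available; the decomposition is the device one needs when the datum is only in $\chi^{-1}$ (the setting of Benameur's original result), and in the Sobolev setting of Theorem \ref{thm2} it buys nothing.

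The place to tighten is the final restart, where you correctly flag the obstacle but propose fixes that do not quite work. Fix (b) is circular: the uniqueness attached to Theorem \ref{thm-0} operates in the class $u\in L^{1}(\chi^{1})$, $\partial_{t}u\in L^{1}_{\mathrm{loc}}(\chi^{-1})$, which is exactly what you cannot verify for the given $u$ when $1/2<s\le 3/2$. Fix (a) appeals to a smoothing property that is nowhere established, and it is unnecessary: the hypothesis $u\in L^{1}_{\mathrm{loc}}(0,\infty;H^{s+1})$ already gives $u(t)\in H^{s+1}$ for a.e.\ $t$, with $s+1>3/2$. So choose $t_{0}$ simultaneously in this full-measure set and in the unbounded set where $\|u(t_{0})\|_{\chi^{-1}}<\ve$, apply Theorem \ref{thm1} with Sobolev index $s+1$ to the datum $u(t_{0})$, and identify the resulting solution with $u$ on $[t_{0},\infty)$ by the $L^{2}$-energy uniqueness argument in the class \eqref{sol-cond} (the paper's closing computation, which applies because the restarted solution lies in $L^{1}_{\mathrm{loc}}(H^{s+2})$ and hence has $\nabla u\in L^{1}_{\mathrm{loc}}(L^{\infty})$). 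This is precisely how the paper closes its own proof. With that substitution your argument is complete; the monotonicity step can even be dropped, since $\|u(t)\|_{\chi^{-1}}\le\|u(t_{0})\|_{\chi^{-1}}<\ve$ for all $t>t_{0}$ already yields the limit, $\ve$ being arbitrary.
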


\begin{remark}
In the Navier-Stokes case $\Omega =0$, this result corresponds to the result in \cite{Benameur_2015}.
In that result, the assumption is only $u\in C([0,\infty);\chi^{-1})$ is a global solution.
Compared with that result, additional assumptions \eqref{sol-cond} are imposed for the uniqueness of solutions.
\end{remark}

As an application of Theorem \ref{thm2} we obtain the following.

\begin{corollary}
The global solution to $(\mathrm{NS}_{\Omega})$ derived in Theorem \ref{thm1} satisfies
$$
  \lim_{t\to 0} \|u(t)\|_{\chi^{-1}}=0.
$$
\end{corollary}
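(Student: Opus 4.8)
The plan is to read the corollary literally. By Theorem \ref{thm1} the global solution obtained there belongs to $C([0,\infty);\,H^{s})$ with $s>3/2$. Since $s>1/2$, Lemma \ref{lemma-1}\,(1) together with $\|f\|_{L^{2}},\|f\|_{\dot H^{s}}\le\|f\|_{H^{s}}$ gives the continuous embedding $H^{s}\hookrightarrow\chi^{-1}$, so $t\mapsto u(t)$ is continuous from $[0,\infty)$ into $\chi^{-1}$. In particular $u(t)\to u(0)=u_{0}$ in $\chi^{-1}$ as $t\to0^{+}$, whence $\lim_{t\to0}\|u(t)\|_{\chi^{-1}}=\|u_{0}\|_{\chi^{-1}}$.

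Thus, taken at face value, the limit as $t\to0$ equals $\|u_{0}\|_{\chi^{-1}}$, which is zero only in the trivial case $u_{0}=0$. Since the corollary is announced as an application of Theorem \ref{thm2}, whose conclusion concerns the behavior as $t\to\infty$, the displayed ``$t\to0$'' is evidently a misprint for ``$t\to\infty$'', and this is the statement I would actually establish.

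To prove the intended assertion it suffices to check that the solution furnished by Theorem \ref{thm1} meets the hypotheses of Theorem \ref{thm2}. Indeed, that solution lies in $C([0,\infty);\,H^{s})$ with $s>3/2>1/2$ and, again by Theorem \ref{thm1}, satisfies $u\in AC([0,\infty);\,H^{s-1})\cap L^{1}_{\mathrm{loc}}(0,\infty;\,H^{s+1})$, which is exactly condition \eqref{sol-cond}. Applying Theorem \ref{thm2} then yields $\lim_{t\to\infty}\|u(t)\|_{\chi^{-1}}=0$. The only point requiring care is matching the regularity class of Theorem \ref{thm1} to the hypotheses of Theorem \ref{thm2}; once the misprint is recognized there is no genuine obstacle, the corollary being a direct specialization of Theorem \ref{thm2}.
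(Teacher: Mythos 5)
Your proposal is correct and follows exactly the paper's (implicit) argument: the corollary is a direct application of Theorem \ref{thm2} to the solution of Theorem \ref{thm1}, whose class $C([0,\infty);H^{s})$ with $s>3/2>1/2$ and $u\in AC([0,\infty);H^{s-1})\cap L^{1}_{\mathrm{loc}}(0,\infty;H^{s+1})$ matches hypothesis \eqref{sol-cond} verbatim. You are also right that ``$t\to 0$'' is a misprint for ``$t\to\infty$'' --- the same slip appears in the last line of the paper's proof of Theorem \ref{thm2}, whose statement and argument (finding $t_{0}$ with $\|u(t)\|_{\chi^{-1}}<\ve$ for all $t>t_{0}$) unambiguously concern the limit at infinity.
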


\vspace{6pt}

This paper is organized as follows.
In Section 2 we give a proof of Theorem \ref{prop-apriori}.
In Section 3 we prove Theorem \ref{thm1} as an application of Theorem \ref{prop-apriori}.
In Section 4 we give a proof of Theorem \ref{thm-0} by using Theorem \ref{prop-apriori} and Theorem \ref{thm1}.
In Section 5 we give a proof of Theorem \ref{thm2}.

\vspace{6pt}

\section{Proof of Theorem \ref{prop-apriori}}

In this section we give a proof of Theorem \ref{prop-apriori}.

\begin{proof}[Proof of Theorem \ref{prop-apriori}]
By applying the Fourier transform to the equation, we have
$$
  \partial_{t}\widehat{u} + \nu |\xi|^{2}\widehat{u}
  + \Omega e_{3}\times \widehat{u}
  + \mathcal{F}\bigl[ (u,\nabla)u\bigr]
  + i\xi \widehat{p} =0.
$$
Thus, we obtain
\begin{align*}
\partial_{t}|\widehat{u}|^{2}
& = 2\mathrm{Re}(\partial_{t}\widehat{u}\cdot \overline{\widehat{u}})\\
& = -2\nu |\xi|^{2} |\widehat{u}|^{2}
  -2\Omega\, \mathrm{Re}\bigl[(e_{3}\times \widehat{u})\cdot\overline{\widehat{u}}\bigr]
  -2\mathrm{Re}\bigl\{\mathcal{F}\bigl[ (u,\nabla)u\bigr]\cdot\overline{\widehat{u}}\bigr\}
  -2 \mathrm{Re}\bigl[(i\xi \widehat{p})\cdot\overline{\widehat{u}}\bigr].
\end{align*}
Here, since 
$$
  (e_{3}\times \widehat{u})\cdot\overline{\widehat{u}}
  = -\widehat{u}_{2}\overline{\widehat{u}_{1}} + \widehat{u}_{1}\overline{\widehat{u}_{2}}
  = 2i\, \mathrm{Im}\bigl[ \widehat{u}_{1}\overline{\widehat{u}_{2}}\bigr],
$$
we observe that $\mathrm{Re}[(e_{3}\times \widehat{u})\cdot\overline{\widehat{u}}]=0$.
Also, we have $(i\xi \widehat{p})\cdot\overline{\widehat{u}}=0$, since $\mathrm{div}\,u=0$. Moreover, we notice that
\begin{align*}
  \mathcal{F}\bigl[ (u,\nabla)u\bigr]_{j}(\xi)
  & = \sum_{k=1}^{3} (2\pi)^{-3}\widehat{u}_{k}\ast\widehat{\partial_{k}u}_{j}(\xi)\\
  & = \sum_{k=1}^{3} (2\pi)^{-3} \int \widehat{u}_{k}(\xi-\eta)\, i\eta_{k}\widehat{u}_{j}(\eta)\,d\eta\\
  & = \sum_{k=1}^{3} (2\pi)^{-3} i\xi_{k}\int\widehat{u}_{k}(\xi-\eta)\, \widehat{u}_{j}(\eta)\,d\eta,
\end{align*}
since $\sum_{k=1}^{3}(\xi_{k}-\eta_{k})\widehat{u}_{k}(\xi-\eta)=0$.
Therefore, we obtain
\begin{align*}
  \partial_{t}|\widehat{u}|^{2} + 2\nu |\xi|^{2} |\widehat{u}|^{2}
  & \le 2(2\pi)^{-3} \sum_{j,k=1}^{3}|\xi_{k}|\,(|\widehat{u}_{k}|\ast|\widehat{u}_{j}|)\,|{u}_{j}|\\
  & \le 2(2\pi)^{-3} |\xi|\,|\widehat{u}|\,(|\widehat{u}|\ast|\widehat{u}|).
\end{align*}
Then, for $\ve >0$, we observe that
\begin{align*}
  \partial_{t}(|\widehat{u}|^{2}+\ve)^{1/2}
  & = \frac{\partial_{t}|\widehat{u}|^{2}}{\ 2(|\widehat{u}|^{2}+\ve)^{1/2}}\\
  & \le -\frac{\nu |\xi|^{2}|\widehat{u}|^{2}}{\ (|\widehat{u}|^{2}+\ve)^{1/2}}
  + (2\pi)^{-3} \frac{|\xi|\,|\widehat{u}|}{\ (|\widehat{u}|^{2}+\ve)^{1/2}}\,(|\widehat{u}|\ast|\widehat{u}|).
\end{align*}
Integrating with respect to $t$, we obtain
\begin{align*}
  & (|\widehat{u}(t,\xi)|^{2}+\ve)^{1/2}
  + \int_{0}^{t} \frac{\nu |\xi|^{2}|\widehat{u}(\tau,\xi)|^{2}}{(|\widehat{u}(\tau,\xi)|^{2}+\ve)^{1/2}}\,d\tau\\
  & \le (|\widehat{u}_{0}(\xi)|^{2}+\ve)^{1/2}
  + (2\pi)^{-3} \int_{0}^{t}\frac{|\xi|\,|\widehat{u}(\tau,\xi)|}{\ (|\widehat{u}(\tau,\xi)|^{2}+\ve)^{1/2}}\,(|\widehat{u}(\tau)|\ast|\widehat{u}(\tau)|)(\xi)\,d\tau.
\end{align*}
Then, letting $\ve\to 0$, we get
$$
  |\widehat{u}(t,\xi)|
  + \int_{0}^{t} \nu |\xi|^{2}|\widehat{u}(\tau,\xi)|\,d\tau
  \le |\widehat{u}_{0}(\xi)|
  + (2\pi)^{-3} \int_{0}^{t}|\xi|\,(|\widehat{u}(\tau)|\ast|\widehat{u}(\tau)|)(\xi)\,d\tau.
$$
Finally, dividing by $|\xi|$, and then integrating over $\R^{n}$, we obtain
$$
  \| u(t)\|_{\chi^{-1}} + \nu \int_{0}^{t} \| u(\tau)\|_{\chi^{1}}\,d\tau
  \le \|u_{0}\|_{\chi^{-1}} + (2\pi)^{-3} \int_{0}^{t} \| u(\tau)\|_{\chi^{0}}^{2}\,d\tau.
$$
By applying Lemma \ref{lemma-1} (2), we obtain,
\begin{equation}\label{ap-1}
  \| u(t)\|_{\chi^{-1}} + \nu  \| u\|_{L^{1}((0,t);\,\chi^{1})}
  \le \|u_{0}\|_{\chi^{-1}} + (2\pi)^{-3} \| u\|_{L^{\infty}((0,t);\,\chi^{-1})} \|u\|_{L^{1}((0,t);\,\chi^{1})}.
\end{equation}

To derive the desired estimate \eqref{apriori}, it suffices to prove that
$$
  \|u\|_{L^{\infty}((0,t);\, \chi^{-1})}\le \|u_{0}\|_{\chi^{-1}},\quad 0\le t < T.
$$
For the proof, we first show that
\begin{equation}\label{step1}
  \| u(t) \|_{\chi^{-1}} < (2\pi)^{3}\nu,\quad 0\le t<T
\end{equation}
holds by contradiction.
From the assumption $\|u_{0}\|_{\chi^{-1}}<(2\pi)^{3}\nu$ and $u\in C([0,T);\chi^{-1})$, we observe that there exists $\delta >0$ such that \eqref{step1} holds on $[0,\delta)$.
Now assume that there exists $t_{0}\in (0,T)$ such that $\| u(t) \|_{\chi^{-1}} < (2\pi)^{3}\nu$ for $0<t<t_{0}$ and
$$
  \| u(t_{0}) \|_{\chi^{-1}} = (2\pi)^{3}\nu,
$$
then by \eqref{ap-1} we reach the contradiction
$$
  (2\pi)^{3}\nu = \| u(t_{0}) \|_{\chi^{-1}} \le \| u_{0} \|_{\chi^{-1}} 
  < (2\pi)^{3}\nu,
$$
since $\|u\|_{L^{\infty}((0,t_{0});\, \chi^{-1})}=(2\pi)^{3}\nu$.
Therefore, we obtain \eqref{step1}.
Finally, applying \eqref{step1} to estimate on the right hand side of \eqref{ap-1}, we obtain
$$
  \| u(t) \|_{\chi^{-1}}< \|u_{0}\|_{\chi^{-1}},\quad 0\le t< T.
$$
This completes the proof.
\end{proof}

\vspace{0cm}

\section{Proof of Theorem \ref{thm1}}

Below we fix $\Omega\in\R$.
For the existence of local solutions, we employ the following result.

\begin{proposition}\label{prop-lwp}
Let $s>3/2$.
For $u_{0}\in H^{s}(\R^{3})$ with $\mathrm{div}\,u_{0}=0$, there exists $T=T(|\Omega|, s, \|u_{0}\|_{H^{s}})>0$ such that $(\mathrm{NS}_{\Omega})$ admits a unique strong solution $u\in C([0,T];\,H^{s}(\R^{3}))$ satisfying 
$$
  u\in AC([0,T];\,H^{s-1}(\R^{3}))
  \cap L^{1}(0,T;\,H^{s+1}(\R^{3})).
$$
\end{proposition}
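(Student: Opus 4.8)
The plan is to reduce $(\mathrm{NS}_{\Omega})$ to a pressureless evolution equation and solve it by the contraction mapping principle, using parabolic smoothing to absorb the loss of one derivative in the nonlinearity. First I would apply the Helmholtz--Leray projection $\mathbb P$ onto divergence-free fields to eliminate $\nabla p$, obtaining $\partial_t u-\nu\Delta u+\Omega\,\mathbb P(e_3\times u)+\mathbb P\,\Div(u\otimes u)=0$, where I have rewritten $(u,\nabla)u=\Div(u\otimes u)$ using $\Div u=0$. I would then pass to the Duhamel (mild) formulation with the heat semigroup $e^{\nu t\Delta}$, treating \emph{both} the Coriolis term $\Omega\,\mathbb P(e_3\times u)$ (which is linear and of order zero) and the convective term $\mathbb P\,\Div(u\otimes u)$ as inhomogeneities:
$$
  u(t)=e^{\nu t\Delta}u_0-\int_0^t e^{\nu(t-\tau)\Delta}\bigl[\Omega\,\mathbb P(e_3\times u)+\mathbb P\,\Div(u\otimes u)\bigr](\tau)\,d\tau .
$$

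Next I would set up the fixed point in the space $X_T=C([0,T];H^s)\cap L^2(0,T;H^{s+1})$ with its natural norm. The linear estimates are standard: $e^{\nu t\Delta}$ is a contraction on $H^s$, and a Fubini computation gives $\|e^{\nu\cdot\Delta}u_0\|_{L^2(0,T;H^{s+1})}\le C\nu^{-1/2}\|u_0\|_{H^s}$. For the convective term the key structural input is that $H^s(\R^3)$ is a Banach algebra for $s>3/2$, so $\|u\otimes u\|_{H^s}\le C\|u\|_{H^s}^2$; combining this with the time-integrable singular kernel $(t-\tau)^{-1/2}$ from $\|e^{\nu(t-\tau)\Delta}\mathbb P\,\Div F\|_{H^s}\lesssim (t-\tau)^{-1/2}\|F\|_{H^s}$ closes the $C_tH^s$ bound with a gain of $T^{1/2}$, while maximal $L^2$-regularity of the heat semigroup handles the $L^2_tH^{s+1}$ bound with the same small factor. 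The Coriolis contribution is estimated by $\|\Omega\,\mathbb P(e_3\times u)\|_{H^s}=|\Omega|\,\|u\|_{H^s}$, producing factors $|\Omega|T$ and $|\Omega|T^{1/2}$; this is precisely where the dependence $T=T(|\Omega|,s,\|u_0\|_{H^s})$ enters. For $T$ small enough the map is a contraction on a ball of $X_T$, yielding a unique solution there. Uniqueness in the full solution class I would obtain by an $L^2$ energy estimate for the difference $w=u-v$: the Coriolis term drops out since $(e_3\times w)\cdot w=0$, the term $\mathbb P(v\cdot\nabla w)$ pairs to zero against $w$ by $\Div v=0$, and $|(w\cdot\nabla u,w)|$ is handled by integrating by parts to put the derivative on $w$ and absorbing it into the viscous dissipation, which only requires $u\in L^\infty$, hence $s>3/2$.

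Finally I would upgrade the regularity. On the finite interval $[0,T]$, Cauchy--Schwarz gives $L^2(0,T;H^{s+1})\hookrightarrow L^1(0,T;H^{s+1})$, yielding the stated $L^1$-bound. Reading $\partial_t u=\nu\Delta u-\Omega\,\mathbb P(e_3\times u)-\mathbb P\,\Div(u\otimes u)$ term by term then shows $\partial_t u\in L^1(0,T;H^{s-1})$ (the viscous term lies there because $u\in L^1(0,T;H^{s+1})$, and the remaining two terms are bounded in $H^{s-1}$ by $|\Omega|\,\|u\|_{H^s}$ and $C\|u\|_{H^s}^2$), whence $u\in AC([0,T];H^{s-1})$; the pressure is recovered a posteriori from $\nabla p=-(I-\mathbb P)[\Omega\,e_3\times u+(u,\nabla)u]$. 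The main obstacle I anticipate is the balance at the critical exponent: the convective nonlinearity loses exactly one derivative, so the argument must recover it through the parabolic smoothing and maximal-regularity estimates, and the borderline case $s>3/2$ is dictated by the algebra property of $H^s$ being the minimal regularity at which the quadratic estimate closes. A secondary point requiring care is confirming that treating the Coriolis term as a zeroth-order forcing genuinely keeps it subordinate (so that it only affects the existence time through $|\Omega|$); alternatively one could work with the Stokes--Coriolis semigroup directly, whose symbol is $e^{-\nu t|\xi|^2}$ times an orthogonal matrix, to obtain the same smoothing bounds uniformly in $\Omega$.
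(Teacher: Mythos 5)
Your proposal follows essentially the same route as the paper, which does not prove this proposition in detail but cites \cite[Lemma 3.1]{Koh-Lee-Takada_2014} and sketches exactly this argument: a contraction mapping for the Duhamel formulation with the heat semigroup, treating both $\Omega\,\mathbb{P}(e_{3}\times u)$ and the convective term as inhomogeneities, with the existence time determined by a condition of the form $C_{0}|\Omega|T + C_{1}\|u_{0}\|_{H^{s}}(T+T^{1/2}\nu^{-1/2})\le \tfrac12$, matching the factors $|\Omega|T$ and $T^{1/2}$ you identify. Your filled-in details (algebra property of $H^{s}$ for $s>3/2$, parabolic smoothing to recover the lost derivative, $L^{2}_{t}H^{s+1}\hookrightarrow L^{1}_{t}H^{s+1}$ on a finite interval, and the $L^{2}$ energy argument for uniqueness) are all consistent with that reference.
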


\begin{remark}
(1) For the proof, we refer to \cite[Lemma 3.1]{Koh-Lee-Takada_2014}.
The idea is based on to construct the solution to the integral equation
\begin{equation*}
  u(t)=e^{\nu t\Delta}u_{0}
  -\Omega \int_{0}^{t}e^{\nu (t-\tau)\Delta}\mathbb{P}(e_{3}\times u)(\tau)\,d\tau
  -\int_{0}^{t}e^{\nu (t-\tau)\Delta}\mathbb{P}(u, \nabla u)(\tau)\,d\tau
\end{equation*}
by the contraction mapping argument, where $\mathbb{P}=(\delta_{ij}+R_{i}R_{j})_{i,j}$ is the Helmholtz projection.
We notice that the condition in \cite[Lemma 3.1]{Koh-Lee-Takada_2014} is $s>3/2+1$, because their main subject is the Euler equation.
For the above statement, $s>3/2$ is sufficient.

\vspace{6pt}
\noindent
(2) In this proposition, the size of $T$ is characterized by
\begin{equation}\label{cond_T}
  C_{0}|\Omega|T + C_{1}\|u_{0}\|_{H^{s}}(T+T^{1/2}\nu^{-1/2})\le \frac{1}{2}.
\end{equation}

\vspace{6pt}
\noindent
(3) Since $s>3/2$, the solution constructed by Proposition \ref{prop-lwp} satisfies the assumptions in Theorem \ref{prop-apriori}.
In particular, since 
\begin{equation*}
  \partial_{t}u=\nu\Delta u - \Omega \mathbb{P}(e_{3}\times u)  - \mathbb{P}(u,\nabla u)\quad \mbox{in}\ H^{s-1}
\end{equation*}
holds for a.e.\ $t\in (0,T)$, we easily observe that $\partial_{t}u\in L^{1}(0,T;\,\chi^{-1})$.
\end{remark}

We will use the following energy estimate.
\begin{proposition}\label{prop-energy}
Let $s\ge 0$ and $T>0$.
Assume that $u\in C([0,T);\,H^{s}(\R^{3}))$ is a solution to $(\mathrm{NS}_{\Omega})$ satisfying
$$
  u\in AC([0,T);\,H^{s-1}(\R^{3}))
  \cap L^{1}(0,T;H^{s+1}(\R^{3})).
$$
Then, $u$ satisfies
$$
  \|u(t)\|_{H^{s}} 
    \le \| u_{0}\|_{H^{s}}e^{C\int_{0}^{T}\|\nabla u(\tau)\|_{L^{\infty}}\,d\tau},
    \quad 0\le t < T.
$$
\end{proposition}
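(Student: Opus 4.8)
The plan is to run an $H^s$ energy estimate on the pressure-free formulation, exploiting the skew-symmetry of the Coriolis term exactly as in the proof of Theorem~\ref{prop-apriori}. Applying the Helmholtz projection $\mathbb{P}$ removes the pressure and, since $\mathbb{P}u=u$, recasts $(\mathrm{NS}_\Omega)$ as
$$
  \partial_t u = \nu\Delta u - \Omega\,\mathbb{P}(e_3\times u) - \mathbb{P}((u,\nabla)u)
$$
in $H^{s-1}$ for a.e.\ $t$. I would then pair $\D^s\partial_t u$ with $\D^s u$ in $L^2$, with the aim of producing the differential inequality $\tfrac12\frac{d}{dt}\norm{u}_{H^s}^2\le C\norm{\nabla u}_{L^\infty}\norm{u}_{H^s}^2$, and close by Gronwall's inequality.

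Three contributions must be examined. The dissipative term gives $\bra{\nu\D^s\Delta u,\D^s u}_{L^2}=-\nu\norm{\nabla\D^s u}_{L^2}^2\le 0$ and may be discarded. For the Coriolis term, $\D^s$ is a scalar multiplier acting componentwise, hence commutes with $e_3\times\cdot$, while $\mathbb{P}$ is self-adjoint and commutes with $\D^s$; together with $\mathbb{P}u=u$ this reduces the term to $\bra{e_3\times\D^s u,\D^s u}_{L^2}=\int(e_3\times\D^s u)\cdot\D^s u\,dx=0$, which vanishes pointwise for every $\Omega$. This is the $H^s$ analogue of the identity $\mathrm{Re}[(e_3\times\widehat u)\cdot\overline{\widehat u}]=0$ used in Theorem~\ref{prop-apriori}. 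For the nonlinear term, self-adjointness of $\mathbb{P}$ gives $\bra{\D^s\mathbb{P}((u,\nabla)u),\D^s u}_{L^2}=\bra{\D^s((u,\nabla)u),\D^s u}_{L^2}$; writing $\D^s((u,\nabla)u)=(u,\nabla)\D^s u+[\D^s,u\cdot\nabla]u$ and integrating by parts, the leading term $\bra{(u,\nabla)\D^s u,\D^s u}_{L^2}=\tfrac12\int u\cdot\nabla\abs{\D^s u}^2\,dx$ vanishes because $\mathrm{div}\,u=0$, and the remaining commutator is controlled by the Kato--Ponce estimate
$$
  \norm{[\D^s,u\cdot\nabla]u}_{L^2}\le C\norm{\nabla u}_{L^\infty}\norm{u}_{H^s},
$$
so that $\abs{\bra{[\D^s,u\cdot\nabla]u,\D^s u}_{L^2}}\le C\norm{\nabla u}_{L^\infty}\norm{u}_{H^s}^2$.

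The main obstacle is to justify rigorously the identity $\frac{d}{dt}\norm{u(t)}_{H^s}^2=2\bra{\D^s\partial_t u,\D^s u}_{L^2}$, since the hypotheses supply only $u\in AC([0,T);H^{s-1})\cap L^1(0,T;H^{s+1})$, so that $\D^s\partial_t u\in H^{-1}$ and $\D^s u\in H^1$ for a.e.\ $t$ and the pairing must be read as the $H^{-1}$--$H^1$ duality. I would establish this by spatial mollification $u_\ve=\rho_\ve\ast u$: for $u_\ve$ the time differentiation is classical by the absolute continuity in $H^{s-1}$, every step above is legitimate, and one passes to the limit $\ve\to0$ using the assumed regularity together with a commutator argument of DiPerna--Lions type for the mollified transport term; alternatively the identity follows from the Lions--Magenes integration-by-parts lemma on the triple $H^{s+1}\hookrightarrow H^s\hookrightarrow H^{s-1}$. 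Once the differential inequality $\frac{d}{dt}\norm{u}_{H^s}^2\le 2C\norm{\nabla u}_{L^\infty}\norm{u}_{H^s}^2$ holds for a.e.\ $t$, Gronwall's inequality yields
$$
  \norm{u(t)}_{H^s}^2\le\norm{u_0}_{H^s}^2\exp\Bigl(2C\int_0^t\norm{\nabla u(\tau)}_{L^\infty}\,d\tau\Bigr),
$$
and bounding $\int_0^t$ by $\int_0^T$ and taking square roots gives the claimed estimate.
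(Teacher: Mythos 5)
Your argument is correct and coincides with the paper's proof, which simply cites the standard $H^s$ energy method of Koh--Lee--Takada (commutator decomposition with the Kato--Ponce estimate, the transport term killed by $\mathrm{div}\,u=0$) together with the observation that the Coriolis contribution vanishes because $(e_3\times v)\cdot v=0$ pointwise. You supply more detail than the paper does (in particular the justification of $\frac{d}{dt}\|u\|_{H^s}^2=2\langle\D^s\partial_t u,\D^s u\rangle$ via mollification or Lions--Magenes), but the route is the same.
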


\begin{remark}
For the proof of this proposition, we also refer to \cite[Proof of Theorem 4.1]{Koh-Lee-Takada_2014}.
There, we easily observe that
$$
  \frac{d}{dt}\|u(t)\|_{H^{s}}
  \le C\| \nabla u(t)\|_{L^{\infty}} \|u(t)\|_{H^{s}}
$$
holds for $s\ge 0$.
We notice that the term concerning $\Omega e_{3}\times u$ vanishes due to 
\begin{equation*}
  \Omega (e_{3}\times u)\cdot u =0.
\end{equation*}
\end{remark}


\vspace{10pt}
Now we are in a position to prove Theorem \ref{thm1}.

\begin{proof}[Proof of Theorem \ref{thm1}.]
Let $T^{\ast}$ be the maximal existence time of a unique solution derived by applying Proposition \ref{prop-lwp} repeatedly.
Now assume $T^{\ast}<\infty$.
Then, by \eqref{cond_T}, we must have
\begin{equation}\label{thm1-1}
  \lim_{t\to T^{\ast}} \| u(t)\|_{H^{s}}=\infty.
\end{equation}

Since this solution satisfies the energy estimate  in Proposition \ref{prop-energy}, we have
$$
  \|u(t)\|_{H^{s}} 
    \le \| u_{0}\|_{H^{s}}e^{C\int_{0}^{T^{\ast}}\|\nabla u(\tau)\|_{L^{\infty}}\,d\tau},
    \quad 0\le t < T^{\ast}.
$$
Then, since $\|u_{0}\|_{\chi^{-1}} < (2\pi)^{3}\nu$, applying Theorem \ref{prop-apriori} we obtain
$$
  \int_{0}^{T^{\ast}}\|\nabla u(\tau)\|_{L^{\infty}}\,d\tau
  \le \| u\|_{L^{1}(0,T^{\ast};\,\chi^{1})}
  \le \frac{\| u_{0}\|_{\chi^{-1}}}{\nu - (2\pi)^{-3}\| u_{0}\|_{\chi^{-1}}}.
$$
This implies $\sup_{0<t<T^{\ast}}\|u(t)\|_{H^{s}}<\infty$, which contradicts to \eqref{thm1-1}.
\end{proof}

\vspace{6pt}

\section{Proof of Theorem \ref{thm-0}}
In this section we give a proof of Theorem \ref{thm-0}.

For $u_{0}\in\chi^{-1}$ and $R>0$, we set
\begin{equation*}
  D_{R}=\{\xi\in \R^{3}\,|\, |\xi|\le R,\ |\widehat{u}_{0}(\xi)|\le R\},\quad
  u_{0}^{R}=\mathcal{F}^{1}[\chi_{D_{R}}\widehat{u_{0}}],
\end{equation*}
where $\chi_{D_{R}}$ denotes the characteristic function of $D_{R}$.
Then, we observe that
$$
  u_{0}^{R}\in H^{\infty},\quad \|u_{0}^{R}\|_{\chi^{-1}}\le \|u_{0}\|_{\chi^{-1}},
$$
and from Lebesgue's dominant convergence theorem,
\begin{equation}\label{data-conv}
  \|u_{0}^{R}-u_{0}\|_{\chi^{-1}}
  = \int |\xi|^{-1}\bigl(\chi_{D_{R}}(\xi)-1\bigr)
  |\widehat{u}_{0}(\xi)|\,d\xi \to 0,\quad R\to\infty,
\end{equation}
since $u_{0}\in\chi^{-1}$.

Now we apply Theorem \ref{thm1} for the data $u_{0}^{R}$ to derive a unique global solution $u^{R}\in C([0,\infty);\,H^{s})$ satisfying
$$
  u^{R}\in AC([0,\infty);\,H^{s-1})
  \cap L^{1}(0,\infty;H^{s+1})
$$
for $s>3/2$, and
\begin{equation}\label{ur-est}
  \| u^{R}\|_{L^{\infty}(0,\infty;\,\chi^{-1})}\le \|u_{0}\|_{\chi^{-1}},\quad
  \| u^{R}\|_{L^{1}(0,\infty;\,\chi^{1})}\le \frac{\|u_{0}\|_{\chi^{-1}}}{\nu - (2\pi)^{-3}\|u_{0}\|_{\chi^{-1}}}.
\end{equation}

Below we first show that $\{u^{R}\}$ is a Cauchy sequence in $L^{\infty}(0,\infty;\,\chi^{-1})$.
If we set $w=u^{R}-u^{R'}$, then $w$ satisfies
$$
  \partial_{t}w-\nu \Delta w + \Omega e_{3}\times w
  + (u^{R},\nabla)w + (w,\nabla)u^{R'} + \nabla(p^{R}-p^{R'})=0.
$$
Then, from the argument in the proof of Theorem \ref{prop-apriori}, we obtain
\begin{align*}
  & \|w(t)\|_{\chi^{-1}} + \nu \int_{0}^{t}\|w(\tau)\|_{\chi^{1}}\,d\tau\\
  & \le \|w(0)\|_{\chi^{-1}} + (2\pi)^{-3} \int_{0}^{t}\bigl(\|u^{R}(\tau)\|_{\chi^{0}}
    + \|u^{R'}(\tau)\|_{\chi^{0}}\bigr)\|w(\tau)\|_{\chi^{0}}\,d\tau.
\end{align*}
Here, applying Lemma \ref{lemma-1} (2) we have
\begin{equation}\label{chi0-est}
\begin{aligned}
  \|u^{R}\|_{\chi^{0}} \|w\|_{\chi^{0}}
  & \le \|u^{R}\|_{\chi^{-1}}^{1/2}\|u^{R}\|_{\chi^{1}}^{1/2}
    \|w\|_{\chi^{-1}}^{1/2}\|w\|_{\chi^{1}}^{1/2}\\
  & \le \frac{1}{2}(\|u^{R}\|_{\chi^{-1}}\|w\|_{\chi^{1}}
    + \|u^{R}\|_{\chi^{1}}\|w\|_{\chi^{-1}}).
\end{aligned}
\end{equation}
Therefore, combining \eqref{ur-est} we obtain
\begin{equation}\label{diff-est}
\begin{aligned}
  \|w(t)\|_{\chi^{-1}} + (\nu - (2\pi)^{-3} & \|u_{0}\|_{\chi^{-1}}) \int_{0}^{t}\|w(\tau)\|_{\chi^{1}}\,d\tau\\
  &\le \|w(0)\|_{\chi^{-1}} +  \int_{0}^{t} a(\tau)\,\|w(\tau)\|_{\chi^{-1}}\,d\tau,
\end{aligned}
\end{equation}
where
$$
  a(\tau)=\frac{1}{2(2\pi)^{3}}\bigl(\|u^{R}(\tau)\|_{\chi^{1}}+\|u^{R'}(\tau)\|_{\chi^{1}}\bigr).
$$
Note that by \eqref{ur-est} we have a uniform bound
$$
  \int_{0}^{\infty}a(\tau)\,d\tau \le \frac{\|u_{0}\|_{\chi^{-1}}}{(2\pi)^{3}\nu - \|u_{0}\|_{\chi^{-1}}}.
$$
Thus, applying Gronwall's inequality to \eqref{diff-est} we obtain
\begin{equation}\label{w-1-est}
  \| w(t)\|_{\chi^{-1}} \le \|w(0)\|_{\chi^{-1}}e^{\int_{0}^{t}a(\tau)\,d\tau},
\end{equation}
which implies 
$$
  \|u^{R}-u^{R'}\|_{L^{\infty}(0,\infty;\,\chi^{-1})}\le  \|u_{0}^{R}-u_{0}^{R'}\|_{\chi^{-1}}
  e^{\int_{0}^{\infty}a(\tau)\,d\tau} \to 0,\quad R,\, R'\to\infty.
$$
Therefore, there exists $u\in L^{\infty}(0,\infty;\,\chi^{-1})$ such that $u^{R}\to u$ in $L^{\infty}(0,\infty;\,\chi^{-1})$.

We next show the convergence in $L^{1}(0,\infty;\,\chi^{1})$.
The convergence in $L^{\infty}(0,\infty;\,\chi^{-1})$ implies there exists a subsequence $\{u^{\widetilde{R}}\}$ such that for a.e.\ $(t,\xi)$,
$$
  \mathcal{F}\bigl[u^{\widetilde{R}}\bigr](t,\xi) \to \widehat{u}(t,\xi),\quad R\to\infty.
$$
Therefore, by Fatou's lemma and the estimate derived from \eqref{diff-est} and \eqref{w-1-est},
$$
  \|w\|_{L^{1}(0,\infty;\,\chi^{1})} \le \frac{\|w(0)\|_{\chi^{-1}}}{\nu - (2\pi)^{-3} \|u_{0}\|_{\chi^{-1}}}\Bigl(1 + \int_{0}^{\infty}a(\tau)\,d\tau e^{\int_{0}^{\infty}a(\tau)\,d\tau} \Bigr),
$$
we conclude that
$$
  \|u^{R}-u\|_{L^{1}(0,\infty;\,\chi^{1})}
  \le \liminf_{\widetilde{R}\to 0} \|u^{R}-u^{\widetilde{R}}\|_{L^{1}(0,\infty;\,\chi^{1})}
  \to 0,\quad R\to\infty.
$$

From convergence in $L^{\infty}(0,\infty;\,\chi^{-1})\cap L^{1}(0,\infty;\,\chi^{1})$ we observe that the limit $u$ satisfies the integral equation
\begin{equation*}
  u(t)=e^{\nu t\Delta}u_{0}
  -\Omega \int_{0}^{t}e^{\nu (t-\tau)\Delta}\mathbb{P}(e_{3}\times u)(\tau)\,d\tau
  -\int_{0}^{t}e^{\nu (t-\tau)\Delta}\mathbb{P}\,\nabla\cdot(u\otimes u)(\tau)\,d\tau,
\end{equation*}
which $u^{R}$ also satisfies for the data $u_{0}^{R}$.
In fact, we are able to estimate
\begin{align*}
  & \|e^{\nu t \Delta}u_{0}^{R}-e^{\nu t \Delta}u_{0}\|_{\chi^{-1}}
  \le \|u_{0}^{R}-u_{0}\|_{\chi^{-1}},\\
  & \Bigl\| \int_{0}^{t}e^{\nu (t-\tau)\Delta}\mathbb{P}(e_{3}\times u^{R})(\tau)\,d\tau
    - \int_{0}^{t}e^{\nu (t-\tau)\Delta}\mathbb{P}(e_{3}\times u)(\tau)\,d\tau\Bigr\|_{\chi^{-1}}\\
    & \le t\|u^{R}-u\|_{L^{\infty}(0,\infty;\,\chi^{-1})},
\end{align*}
and
\begin{align*}
& \Bigl\|\int_{0}^{t}e^{\nu (t-\tau)\Delta}\mathbb{P}(u^{R}, \nabla u^{R})(\tau)\,d\tau
  - \int_{0}^{t}e^{\nu (t-\tau)\Delta}\mathbb{P}(u, \nabla u)(\tau)\,d\tau \Bigr\|_{\chi^{-1}}\\
  & \le \int_{0}^{t} \bigl(\|u^{R}(\tau)\|_{\chi^{0}}+\|u(\tau)\|_{\chi^{0}}\bigr)\|u^{R}(\tau)-u(\tau)\|_{\chi^{0}}\,d\tau\\
  & \le C\bigl( \|u^{R}-u\|_{L^{\infty}(0,\infty;\,\chi^{-1})} + 
  \|u^{R}-u\|_{L^{1}(0,\infty;\,\chi^{1})}\bigr),
\end{align*}
where we applied the estimate like \eqref{chi0-est} and the uniform bound \eqref{ur-est}.

We next show $\partial_{t} u\in L^{1}(0,T;,\chi^{-1})$ for any $T>0$, which implies $u\in C([0,\infty);\,\chi^{-1})$.
To prove this, we consider to apply $\partial_{t}$ to the right hand of the integral equation.
We first notice that for the first term
\begin{align*}
  \|\partial_{t}e^{\nu \Delta t}u_{0}\|_{L^{1}(0,\infty;\chi^{-1})}
  & = \|\nu \Delta e^{\nu \Delta t}u_{0}\|_{L^{1}(0,\infty;\chi^{-1})}\\
  & = \nu \int_{0}^{\infty} \int |\xi| e^{-\nu|\xi|^{2}t} |\widehat{u_{0}}(\xi)|\,d\xi dt\\
  & = \int |\xi|^{-1}|\widehat{u_{0}}(\xi)|\,d\xi
  = \|u_{0}\|_{\chi^{-1}}
\end{align*}
holds by changing the order of the integrals.
This type of argument can be found in \cite[Lemma 3.5]{Konieczny-Yoneda_2011}.
(See also \cite[Theorem 2.5]{Giga-Saal_2011} in relation with the $L^{1}$-maximal regularity.)
So, it suffices to show that $\partial_{t}\Phi\in L^{1}(0,T;\,\chi^{-1})$, where
$$
  \Phi(t)=\Omega \int_{0}^{t}e^{\nu (t-\tau)\Delta}\mathbb{P}(e_{3}\times u)(\tau)\,d\tau
  +\int_{0}^{t}e^{\nu (t-\tau)\Delta}\mathbb{P}\,\nabla\cdot(u\otimes u)(\tau)\,d\tau.
$$
Since
$$
  \partial_{t}\Phi (t)=\Delta \Phi(t) + \mathbb{P}(e_{3}\times u)(t) 
  + \mathbb{P}\,\nabla\cdot(u\otimes u)(t),
$$
we will check each term on the right hand side belongs to $L^{1}(0,T;\,\chi^{-1})$.
It is easy to see that
\begin{align*}
  & \int_{0}^{T}\| \mathbb{P}(e_{3}\times u) \|_{\chi^{-1}}\,dt
  \le T\|u\|_{L^{\infty}(0,T;\chi^{-1})},\\
  & \int_{0}^{T} \| \mathbb{P}\,\nabla\cdot(u\otimes u)(t)\|_{\chi^{-1}}\,dt
  \le \int_{0}^{T} \|u\|_{\chi^{0}}^{2}\,d\tau
  \le \|u\|_{L^{\infty}(0,T;\chi^{-1})}\|u\|_{L^{1}(0,T;\chi^{1})},\\
  & \int_{0}^{T}\Bigl\|\Omega \int_{0}^{t} \Delta e^{\nu (t-\tau)\Delta}\mathbb{P}(e_{3}\times u)(\tau)\,d\tau \Bigr\|_{\chi^{-1}}\,dt
  \le |\Omega|T \|u\|_{L^{1}(0,T;\,\chi^{1})}.
\end{align*}
And applying the argument the above again,
\begin{align*}
  & \int_{0}^{T}\Bigl\| \int_{0}^{t} \Delta e^{\nu (t-\tau)\Delta}\mathbb{P}\,\nabla\cdot(u\otimes u)(\tau)\,d\tau \Bigr\|_{\chi^{-1}}\,dt\\
  & \le \int_{0}^{T}\Bigl(\int_{0}^{t}\int |\xi|^{2}e^{-\nu (t-\tau) |\xi|^{2}} (|\widehat{u}(\tau)|\ast|\widehat{u}(\tau)|)(\xi)\,d\xi\, d\tau\Bigr)\, dt\\
  & = \int_{0}^{T} \int \Bigl(\int_{\tau}^{T}|\xi|^{2}e^{-\nu (t-\tau) |\xi|^{2}}\,dt\Bigr)
  (|\widehat{u}(\tau)|\ast|\widehat{u}(\tau)|)(\xi)\,d\xi\,d\tau\\
  & \le \int_{0}^{T} \|u\|_{\chi^{0}}^{2}\,d\tau
  \le \|u\|_{L^{\infty}(0,T;\chi^{-1})}\|u\|_{L^{1}(0,T;\chi^{1})}.
\end{align*}

Finally, we notice that \eqref{w-1-est} implies the uniqueness of solutions.

\vspace{6pt}

\section{Proof of Theorem \ref{thm2}}
In this section we give a proof of Theorem \ref{thm2}.

We take $\ve>0$ arbitrary small.
Since $u_{0}\in H^{s}\hookrightarrow\chi^{-1}$, we are able to choose $R_{0}>0$ such that
$$
  \int_{|\xi|> R_{0}} |\xi|^{-1}
  |\widehat{u}_{0}(\xi)|\,d\xi < \frac{\ve}{2}.
$$
Now we set
$$
  v_{0}=\mathcal{F}^{-1}[\chi_{\{|\xi|\le R_{0}\}}\widehat{u_{0}}],\quad
  w_{0}=\mathcal{F}^{-1}[\chi_{\{|\xi|> R_{0}\}}\widehat{u_{0}}].
$$
Then, we observe that $v_{0}\in H^{\infty}$, $w_{0}\in H^{s}$, $u_{0}=v_{0}+w_{0}$, and
$$
  \|w_{0}\|_{\chi^{-1}}<\frac{\ve}{2}.
$$

By applying Theorem \ref{thm1} for the initial data $w_{0}$ we obtain the solution $(w,p_{w})$ to $(\mathrm{NS}_{\Omega})$.
Then, $w\in C([0,\infty);\, H^{s})\cap L^{1}(0,\infty;\,H^{s+1})$ satisfies
\begin{equation}\label{w-est}
  \|w(t)\|_{\chi^{-1}} + (\nu-(2\pi)^{-3}\|w_{0}\|_{\chi^{-1}})\int_{0}^{t}\|w(\tau)\|_{\chi^{1}}\,d\tau \le \|w_{0}\|_{\chi^{-1}}<\frac{\ve}{2},\quad t>0.
\end{equation}

Now we set $v:=u-w$.
Then, $v\in C([0,\infty);\, H^{s})$ satisfies
\begin{equation*}
  v\in AC([0,\infty);\,H^{s-1}(\R^{3}))
  \cap L^{1}(0,\infty;H^{s+1}(\R^{3}))
\end{equation*}
and
\begin{equation*}
\left\{
\begin{aligned}
&\partial_{t}v+\nu \Delta v + \Omega e_{3}\times v + (v,\nabla)v + (w,\nabla)v + (v,\nabla)w +\nabla (p-p_{w}) = 0,\\ 
&\mathrm{div}\, v=0,\\  
&v|_{t=0}=v_{0}. 
\end{aligned}
\right.
\end{equation*}
Taking $L^{2}$-inner product with $v$, the equation becomes
\begin{equation*}
\frac{d}{dt}\|v(t)\|_{L^{2}}^{2} + \nu \|\nabla v(t)\|_{L^{2}}^{2}
= \langle (v,\nabla )w,v \rangle_{L^{2}}.
\end{equation*}
Since
$$
  \langle (v,\nabla )w,v \rangle_{L^{2}} = - \langle w, (v,\nabla)v\rangle_{L^{2}},
$$
we obtain
\begin{align*}
|\langle (v,\nabla )w,v \rangle_{L^{2}}|
& \le \| w \|_{L^{\infty}} \|v \|_{L^{2}} \| \nabla v\|_{L^{2}}\\
& \le C \|w\|_{\chi^{0}}\|v \|_{L^{2}} \| \nabla v\|_{L^{2}}\\
& \le C_{\nu} \|w\|_{\chi^{0}}^{2}\|v \|_{L^{2}}^{2}
  + \frac{\nu}{2}\, \| \nabla v\|_{L^{2}}^{2}
\end{align*}
Therefore, we obtain
\begin{equation*}
  \frac{d}{dt}\|v(t)\|_{L^{2}}^{2} + \frac{\nu}{2}\, \|\nabla v(t)\|_{L^{2}}^{2}
  = C_{\nu} \|w(t)\|_{\chi^{0}}^{2}\|v(t) \|_{L^{2}}^{2}.
\end{equation*}
Then, by Gronwall's inequality,
\begin{equation}\label{l2-energy}
  \|v(t)\|_{L^{2}}^{2} + \frac{\nu}{2}\, \int_{0}^{t}\|\nabla v(t)\|_{L^{2}}^{2}
  \le \| v(0)\|_{L^{2}}^{2} e^{C_{\nu} \int_{0}^{t} \|w(\tau)\|_{\chi^{0}}^{2}\,d\tau}.
\end{equation}
Here, by \eqref{w-est} we have
\begin{equation}\label{exp-est}
  \int_{0}^{t}\|w(\tau)\|_{\chi^{0}}^{2}\,d\tau
  \le \|w\|_{L^{\infty}((0,t);\,\chi^{-1})} \|w\|_{L^{1}((0,t);\,\chi^{1})}
  \le \frac{\| w_{0}\|_{\chi^{-1}}^{2}}{\nu - (2\pi)^{-3}\| w_{0}\|_{\chi^{-1}}}.
\end{equation}
Therefore, by Lemma \ref{lemma-1} (1), \eqref{l2-energy}, \eqref{exp-est} we obtain
\begin{equation*}
  \int_{0}^{\infty} \|v(t)\|_{\chi^{-1}}^{4}\,d\tau
  \le \int_{0}^{\infty} \| v(t)\|_{L^{2}}^{2} \|\nabla v(t)\|_{L^{2}}^{2}
  \le \frac{2}{\nu}\,\|v_{0}\|_{L^{2}}^{4}\,\exp\Bigl(\frac{C_{\nu}\,\| w_{0}\|_{\chi^{-1}}^{2}}{\nu - (2\pi)^{-3}\| w_{0}\|_{\chi^{-1}}}\Bigr).
\end{equation*}
Since $v\in C([0,\infty);\,\chi^{-1})$, we observe that there exists $t_{0}>0$ such that $\|v(t_{0})\|_{\chi^{-1}}<\ve/2$, and thus we have $\|u(t_{0})\|_{\chi^{-1}} \le \|v(t_{0})\|_{\chi^{-1}} + \|w(t_{0})\|_{\chi^{-1}} <\ve$.
So, applying Theorem \ref{thm1} for the data $u(t_{0})$ we obtain
$$
  \|u(t)\|_{\chi^{-1}}\le \|u(t_{0})\|_{\chi^{-1}}<\ve,\quad t>t_{0},
$$
which implies $\lim_{t\to 0} \|u(t)\|_{\chi^{-1}}=0$.

Here, we notice that in the final part of the proof we need the uniqueness of solutions, which is assured in our class of solutions.
In fact, if $u_{1}$, and $u_{2} \in C([0,\infty);\,H^{s})$ are two solutions to $(\mathrm{NS}_{\Omega})$ satisfying
$$
  u_{1},\ u_{2}\in AC([0,\infty);\,H^{s-1}(\R^{3}))
  \cap L^{1}_{\mathrm{loc}}(0,\infty;H^{s+1}(\R^{3})),
$$
then, $\widetilde{u}:=u_{1}-u_{2}$ satisfies $\Div\widetilde{u}=0$ and
$$
  \partial_{t}\widetilde{u}+\nu \Delta \widetilde{u} + \Omega e_{3}\times \widetilde{u} + (\widetilde{u},\nabla)\widetilde{u} + (u_{1},\nabla)\widetilde{u} + (\widetilde{u},\nabla)u_{2} +\nabla (p_{1}-p_{2}) = 0,
$$
and thus we obtain
\begin{equation*}
  \frac{d}{dt}\|\widetilde{u}(t)\|_{L^{2}}^{2} + \frac{\nu}{2}\, \|\nabla \widetilde{u}(t)\|_{L^{2}}^{2}
  = |\langle (\widetilde{u},\nabla )u_{2},\widetilde{u} \rangle_{L^{2}}|
  \le \|\nabla u_{2}(t)\|_{L^{\infty}}^{2}\|\widetilde{u}(t) \|_{L^{2}}^{2}.
\end{equation*}
Therefore, we have
\begin{equation*}
  \frac{d}{dt}\|\widetilde{u}(t)\|_{L^{2}}^{2} 
  = C \|u_{2}(t)\|_{H^{s+1}}\|\widetilde{u}(t) \|_{L^{2}}^{2}
\end{equation*}
and Gronwall's inequality implies $\widetilde{u}(t)=0$ for $t>0$.

\vspace{6pt}

\end{document}